 \newtheorem{theorem}{Theorem}[section]
 \newtheorem{corollary}[theorem]{Corollary}
 \newtheorem{proposition}[theorem]{Proposition}
 \theoremstyle{definition}
 \theoremstyle{remark}
 \newtheorem{remark}[theorem]{Remark}
 \newtheorem{convention}{Conventions}
 \numberwithin{equation}{section}
\newcommand{\C}{\mathbb{C}}
\newcommand{\ZZ}{\mathbb{Z}}
\newcommand{\QQ}{\mathbb{Q}}
\newcommand{\NN}{\mathbb{N}}
\newcommand{\PP}{\mathbb{P}}
\newcommand{\MM}{\mathcal M}
\newcommand{\pic}{\hbox{Pic}}
\newcommand{\ima}{\hbox{Im}}
\newcommand{\rom}{\romannumeral}
\newtheorem{nonumbering}{Theorem}
\newtheorem{nonumberingc}{Corollary}
\begin{document}

%-------------------------------------------------------------------------
% editorial commands: to be inserted by the editorial office
%
%\firstpage{1} \volume{228} \Copyrightyear{2004} \DOI{003-0001}
%
%
%\seriesextra{Just an add-on}
%\seriesextraline{This is the Concrete Title of this Book\br H.E. R and S.T.C. W, Eds.}
%
% for journals:
%
%\firstpage{1}
%\issuenumber{1}
%\Volumeandyear{1 (2004)}
%\Copyrightyear{2004}
%\DOI{003-xxxx-y}
%\Signet
%\commby{inhouse}
%\submitted{March 14, 2003}
%\received{March 16, 2000}
%\revised{June 1, 2000}
%\accepted{July 22, 2000}
%
%
%
%---------------------------------------------------------------------------
%Insert here the title, affiliations and abstract:
%

\title[A family of $K3$ surfaces having finite--dimensional motive]
 {A family of $K3$ surfaces having finite--dimensional motive}

\author[Robert Laterveer]
{Robert Laterveer}

\address{Institut de Recherche Math\'ematique Avanc\'ee,\\ CNRS - Universit\'e 
de Strasbourg,\\ 7 Rue Ren\'e Des\-car\-tes,\\ 67084 Strasbourg Cedex,\\ FRANCE.}

\email{robert.laterveer@math.unistra.fr}

\thanks{Thanks to Claire Voisin for having written the invaluable and inspirational monograph \cite{Vo}. Thanks to the participants of the Strasbourg 2014--2015 groupe de travail based on \cite{Vo}. Many thanks and kusjes to Yasuyo, Kai and Len, who provide excellent working conditions in Schiltigheim.}
%----------classification, keywords, date
\subjclass{Primary 14C15, 14C25, 14C30; Secondary 14J28, 14J29}

\keywords{Algebraic cycles, Chow groups, motives, finite--dimensional motives, $K3$ surfaces.}

\date{}
%----------additions
%\dedicatory{To my boss}
%%% ----------------------------------------------------------------------

\begin{abstract} This short note contains an example of a $4$--dimensional family of $K3$ surfaces having finite--dimensional motive. Some consequences are presented, for instance the verification of a conjecture of Voisin (concerning $0$--cycles on the self--product) for $K3$ surfaces in this family.
\end{abstract}

\maketitle

\section{Intro}

The notion of finite--dimensional motive, developed independently by Kimura and O'Sullivan \cite{K}, \cite{An}, \cite{MNP} has given considerable new impetus to the field of algebraic cycles. To give but one example: thanks to this notion, we now know the Bloch conjecture is true for surfaces of geometric genus zero that are rationally dominated by curves. It thus seems worthwhile to find concrete examples of varieties that have finite--dimensional motive, this being (at present) one of the few means of arriving at a satisfactory understanding of Chow groups.

Let us consider complex projective $K3$ surfaces. In case the Picard number is $19$ or $20$, it is known the motive is finite--dimensional \cite{P}. In the cases of Kummer surfaces and $K3$ surfaces with a Shioda--Inose structure, the motive is again finite--dimensional (\cite{K}, resp. \cite[Remark 48]{thoughts}); these last two cases have Picard number at least $17$. There are further sporadic examples of even Picard number \cite[Corollary 2]{P}, \cite{LSY}. Proving that a general $K3$ surface has finite--dimensional motive remains elusive (not to say out of reach).

The aim of this short note is to propose another example of $K3$ surfaces with finite--dimensional motive. Our example is a $4$--dimensional family, so the generic member has Picard number $16$.

\begin{nonumbering}[=Theorem \ref{main}] Let $X$ be a $K3$ surface defined as a complete intersection of quadrics in $\PP^5$:
  \[   \begin{cases}  a_0 x_0^2 +a_1x_1^2+\cdots +a_5 x_5^2=0 &\\
                                 b_0 x_0^2 +b_1x_1^2+\cdots +b_5 x_5^2=0 &\\
                                 c_0 x_0^2 +c_1x_1^2+\cdots +c_5 x_5^2=0\ , &\\
                           \end{cases}\]
                 with $a_i, b_i, c_i\in \C, i=0,\ldots,5$.
                 
   Then $X$ has finite--dimensional motive.
\end{nonumbering}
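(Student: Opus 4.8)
The plan is to peel off everything except the transcendental motive and then to realise that piece inside the motive of an abelian variety. By the refined Chow--K\"unneth decomposition for surfaces (Kahn--Murre--Pedrini) one has $h(X)=\mathbf 1\oplus h^1(X)\oplus h^2(X)\oplus h^3(X)\oplus \mathbf L^2$, and $h^2(X)$ splits further as $h^2_{\mathrm{alg}}(X)\oplus t_2(X)$, the first summand being a sum of Lefschetz motives carried by $\mathrm{NS}(X)$ and the second the transcendental part. For a $K3$ surface $h^1=h^3=0$ and $h^2_{\mathrm{alg}}$ is a direct sum of copies of $\mathbf L$, so $h(X)$ is finite--dimensional if and only if the single summand $t_2(X)$ is; its Betti realisation is the rank--$6$ transcendental lattice of the generic member. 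Everything reduces to proving $t_2(X)$ is finite--dimensional.

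To get a grip on $t_2(X)$ I would use the symmetry forced by the diagonal equations. The sign changes $x_i\mapsto \pm x_i$ preserve $X$ and induce an action of $G\cong(\ZZ/2)^5$; the squaring map $x_i\mapsto x_i^2$ realises the quotient $X/G\cong\PP^2$ (the three quadrics becoming the three linear forms that cut out a $\PP^2$ in the $\PP^5$ of squares), branched along the six coordinate lines. Computing on the residue $2$--form shows that $G$ acts on $H^{2,0}(X)$ through the single character $\epsilon\mapsto\prod_i\epsilon_i$, so $t_2(X)$ is exactly the isotypic summand for that character; up to Lefschetz summands it is the anti--invariant motive of the double cover $Y\to\PP^2$ branched along all six lines. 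Decomposing $h(X)$ into $G$--isotypic pieces, every remaining summand is cut from a double cover of $\PP^2$ branched along at most four of the lines, hence from a rational surface with finite--dimensional (Tate) motive. This concentrates the whole difficulty in $t_2(X)=t_2(Y)$.

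The decisive step, which I expect to be the real obstacle, is to show this rank--$6$ motive is finite--dimensional. One cannot do this by dominating $X$ with a product of curves: the family is four--dimensional and maps onto the full period domain of type $SO(2,4)$, so the generic transcendental Hodge structure is irreducible with Mumford--Tate group $SO(2,4)$, which forbids any embedding of $t_2(X)$ into the $H^2$ of a product of curves. Instead I would attach an abelian variety to the net of quadrics --- through the classical theory of nets of quadrics (the even Clifford algebra of the net over $\PP^2$, or a Prym/intermediate--Jacobian construction built from the associated family of quadric threefolds; for this $(3,6)$ configuration the expected output is an abelian fourfold of Weil type with multiplication by $\QQ(i)$) --- and construct an algebraic correspondence exhibiting $t_2(X)$ as a direct summand of its motive. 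Since abelian varieties have finite--dimensional motive (Kimura--O'Sullivan) and finite--dimensionality passes to summands, this yields the theorem.

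The crux is genuinely the algebraicity: one needs an honest cycle, not merely the Hodge--theoretic Kuga--Satake correspondence, inducing the splitting $t_2(X)\hookrightarrow h(A)$. I would therefore make the net--of--quadrics construction of $A$ explicit for the diagonal family, check that $H^\bullet(A)$ contains the rank--$6$ transcendental Hodge structure of $X$, and verify that the natural incidence correspondence between $X$ and $A$ induces an isomorphism onto $t_2(X)$. The remaining ingredients --- the Kahn--Murre--Pedrini decomposition, the rationality of the auxiliary double covers, and the stability of finite--dimensionality under sums and summands --- are formal.
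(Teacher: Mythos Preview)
Your reduction of the problem to $t_2(Y)$, with $Y$ the (resolved) double cover of $\PP^2$ branched along the six coordinate lines, is correct and lands on the same auxiliary surface as the paper. The route differs: you run a full $(\ZZ/2\ZZ)^5$ character decomposition of $h(X)$ and discard the rational pieces, whereas the paper uses only the symplectic subgroup $G=(\ZZ/2\ZZ)^4$ of even sign changes and invokes the theorem of Voisin and Huybrechts that a finite group acting symplectically on a $K3$ surface acts trivially on $A^2_{hom}$. That yields $A^2_{hom}(X)_\QQ\cong A^2_{hom}(Y)_\QQ$, hence $h(X)\cong h(Y)$ in $\MM_{\rm rat}$, in one stroke and without analysing the remaining isotypic summands. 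Both reductions are valid; the paper's is shorter and avoids the case analysis.

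The genuine gap is in your second half. The assertion that one ``cannot do this by dominating $X$ with a product of curves'' because the generic Mumford--Tate group is $SO(2,4)$ is simply false: transcendental rank $6$ is exactly one of the low ranks where accidental isogenies of orthogonal groups make the Kuga--Satake correspondence accessible, and Paranjape (\emph{Comp.\ Math.}\ 68 (1988), 11--22) proved precisely what you rule out --- the $K3$ surface $Y$ \emph{is} rationally dominated by a product of curves, and the associated abelian variety comes with an explicit algebraic correspondence. The paper just cites this; finite--dimensionality of $h(Y)$, hence of $h(X)$, then follows from Kimura. So your Hodge--theoretic obstruction argument is mistaken, and the ``crux'' you propose to work out by hand (the abelian variety attached to the net of quadrics and the algebraicity of the correspondence) is not an open problem but a theorem already in the literature for this very $Y$. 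Replace that paragraph by a reference to Paranjape and your proof is complete.
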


The proof is based on the fact that the group $(\ZZ/2\ZZ)^4$ acts symplectically on $X$, and is inspired by \cite{GS}, where a systematic study is undertaken of $K3$ surfaces with a symplectic $(\ZZ/2\ZZ)^4$ action. Since (as convincingly argued in \cite{GS}) such $K3$ surfaces are somehow ``close'' to Kummer surfaces, it may be hoped that finite--dimensionality can eventually be proven for {\em all\/} $K3$ surfaces with a symplectic $(\ZZ/2\ZZ)^4$ action (cf. remark \ref{maybe}).

To further highlight the interest of finite--dimensionality, we list a few consequences. These are the truth of a weak version of the relative Bloch conjecture (corollary \ref{relbloch}), and of the Beauville--Voisin conjecture (conjecture \ref{BV}) for this class of $K3$ surfaces.
Another consequence (whose proof uses not only finite--dimensionality, but rather the fact that the motive is of abelian type, in the sense of \cite{V3}) is that these $K3$ surfaces verify a conjecture made by Voisin \cite{V9}:

\begin{nonumberingc}[=Corollary \ref{voisinconj}] Let $X$ be a $K3$ surface as in theorem \ref{main}. Let $a,a^\prime\in A^2_{hom}(X)_{}$ be two $0$--cycles of degree $0$. Then
  \[ a\times a^\prime=a^\prime\times a\ \ \hbox{in}\ A^4(X\times X)_{}\]
  (here the notation $a\times a^\prime$ is short--hand for the cycle class $(p_1)^\ast (a)\cdot (p_2)^\ast(a^\prime)\in A^{4}(X\times X)$, where $p_1, p_2$ denote projection on the first, resp. second factor.)
\end{nonumberingc}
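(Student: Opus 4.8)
The plan is to exploit the stronger property, established in proving Theorem~\ref{main}, that the transcendental motive $t_2(X)$ is not merely finite--dimensional but \emph{of abelian type}. Recall that a $K3$ surface has a Chow--K\"unneth decomposition with no odd part, and that the degree--$2$ piece splits as $h^2(X)=h^2_{alg}(X)\oplus t_2(X)$; let $\Pi^{tr}$ denote the projector onto $t_2(X)$. The first step is the standard fact that homologically trivial $0$--cycles are captured by $t_2(X)$: for $a\in A^2_{hom}(X)$ one has $(\Pi^{tr})_\ast a=a$, so $a$ is a class in $A^2(t_2(X))$. Hence for $a,a'\in A^2_{hom}(X)$ the product $a\times a'$ lies in $A^4(t_2(X)\otimes t_2(X))$. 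The involution $\iota$ exchanging the two factors of $X\times X$ carries $a\times a'$ to $a'\times a$, so the difference $z:=a\times a'-a'\times a$ is anti-invariant and lands in the antisymmetric summand:
\[ z\in A^4(\wedge^2 t_2(X))\subset A^4(X\times X). \]
Since $a$ and $a'$ are homologically trivial, so is $z$, and the whole problem is reduced to proving $z=0$ in $A^4_{hom}(\wedge^2 t_2(X))$.

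The next step is cohomological. Because $X$ is a $K3$ surface, $\dim H^{2,0}(X)=1$, so in the weight--$4$ Hodge structure $\wedge^2 H^2_{tr}(X)$ realizing $N:=\wedge^2 t_2(X)$ one computes
\[ H^{4,0}(N)=\wedge^2 H^{2,0}(X)=0, \]
and by conjugation $H^{0,4}(N)=0$ as well. Thus the cohomology of $N$ is concentrated in degree $4$ and has coniveau $\ge1$: it carries no holomorphic $4$--forms. This is precisely the configuration in which the Bloch--Beilinson philosophy predicts that the homologically trivial $0$--cycles of $N$ vanish, the deepest layer of the $0$--cycles of a fourfold being governed exactly by $H^{4,0}$.

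Turning this prediction into a theorem is where the abelian--type hypothesis is indispensable, and where the real difficulty lies; finite--dimensionality alone cannot suffice, for $t_2(X)$ is itself finite--dimensional while $A^2(t_2(X))=A^2_{hom}(X)$ is infinite--dimensional by Mumford, so no nilpotence argument can annihilate homologically trivial cycles directly. Instead one transports the vanishing $H^{4,0}(N)=0$ to the Chow group by means of the results of \cite{V3}, available precisely for motives of abelian type: the coniveau $\ge1$ condition allows $N$ to be realized as a Tate twist of a direct summand of $h^2$ of a (resolved) divisor in $X\times X$, and Murre's vanishing --- the projectors below the middle dimension act trivially on $0$--cycles, valid here because $N$ is of abelian type --- then gives $A^4(N)=0$, so in particular $z=0$. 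The single non-formal ingredient, and the main obstacle, is to verify that the machinery of \cite{V3} genuinely applies to the summand $N=\wedge^2 t_2(X)$ of $h(X\times X)$: one must confirm that $N$ is of abelian type and that its cohomology has the asserted coniveau, after which the passage from the cohomological vanishing to the Chow--theoretic one is exactly what the abelian--type structure supplies.
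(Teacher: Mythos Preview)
Your reduction to the antisymmetric summand $N=\wedge^2 t_2(X)$ and the observation that $H^{4,0}(N)=0$ are correct and match the underlying philosophy. The gap is in the final step, where you assert that the results of \cite{V3} convert the cohomological vanishing $H^{4,0}(N)=0$ into the Chow--theoretic vanishing $A^4(N)=0$. What you are invoking is a generalised Bloch conjecture for the motive $N$: that a weight--$4$ motive with $H^{4,0}=0$ has trivial $0$--cycles. This is \emph{not} known for motives of abelian type in general. Concretely, your argument factors through ``Murre's vanishing: projectors below the middle act trivially on $0$--cycles'', but Murre's conjecture II is open already for abelian varieties themselves (it contains pieces of Beauville's vanishing conjecture $A^i_{(s)}=0$ for $s$ outside the expected range). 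Likewise, the claim that the coniveau--$1$ condition lets you realise $N$ as a Tate twist of an $h^2$ of a threefold is a motivic effectivity statement that \cite{V3} does not supply. You yourself flag this as ``the main obstacle'', and indeed it is: as written, the proof does not close.

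The paper circumvents this by never trying to prove a vanishing. Instead it first transfers the problem from $X$ to the explicit surface $Y$ (the resolved double plane branched along six lines), then uses Paranjape's \emph{algebraic} Kuga--Satake correspondence for $Y$ to obtain a split injection $t_2(Y)\hookrightarrow h^{4g-2}(A\times A)$ in $\MM_{\rm num}$; nilpotence upgrades this to an injection on Chow groups, landing $A^2_{hom}(Y)_{\QQ}$ inside the specific Beauville piece $A^{2g}_{(2)}(B)_{\QQ}$ with $B=A\times A$. The problem then becomes: for $a,a'\in A^{2g}_{(2)}(B)_{\QQ}$, show $a\times a'=a'\times a$. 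This is proved by hand, using Bloch's description of $A^{2g}_{(2)}$ via products of divisors and the isogeny $(b,b')\mapsto(b+b',b-b')$. So the paper establishes a \emph{symmetry} on an abelian variety rather than a \emph{vanishing} on $X\times X$; this is strictly weaker than what you are trying to prove and, crucially, is accessible.
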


\vskip 0.8cm

\begin{convention} In this note, the word {\sl variety\/} will refer to a quasi--projective irreducible algebraic variety over $\C$, endowed with the Zariski topology. A {\sl subvariety\/} is a (possibly reducible) reduced subscheme which is equidimensional. 

%{\bf All Chow groups will be with rational coefficients}: 
We will denote by $A^jX$ the Chow group of codimension $j$ algebraic cycles on $X$.
%with $\QQ$--coefficients; 
%for $X$ smooth of dimension $n$ the notations $A_jX$ and $A^{n-j}X$ will be used interchangeably. 
%In the rare cases we will have something to say about Chow groups with integral coefficients, we will indicate this by writing $A^\ast X_{\ZZ}$.
Chow groups with rational coefficients will be denoted
  $ A^j(X)_{\QQ}:=A^j(X)\otimes_{\ZZ} \QQ$.
The notation $A^j_{hom}X$, resp. $A^j_{AJ}X$ will be used to indicate the subgroups of homologically trivial, resp. Abel--Jacobi trivial cycles.

The notation $H^jX$ 
%(and $H_jX$) 
will be used to indicate singular cohomology $H^j(X,\QQ)$. 
%(resp. Borel--Moore homology $H_j(X,\QQ)$).

\end{convention}

\vskip 0.8cm

\section{Preliminary}

We refer to \cite{K}, \cite{An}, \cite{I}, \cite{J4}, \cite{MNP} for the definition of finite--dimensional motive. 
An essential property of varieties with finite--dimensional motive is embodied by the nilpotence theorem:

\begin{theorem}[Kimura \cite{K}]\label{nilp} Let $X$ be a smooth projective variety of dimension $n$ with finite--dimensional motive. Let $\Gamma\in A^n(X\times X)_{\QQ}$ be a correspondence which is numerically trivial. Then there is $N\in\NN$ such that
     \[ \Gamma^{\circ N}=0\ \ \ \ \in A^n(X\times X)_{\QQ}\ .\]
\end{theorem}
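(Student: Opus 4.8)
The plan is to work inside the $\QQ$--linear, pseudo--abelian, rigid symmetric monoidal category of Chow motives $\MM$ and to reformulate the statement as a nilpotence assertion for an endomorphism. Write $M=h(X)$, so that $\operatorname{End}_{\MM}(M)=A^n(X\times X)_{\QQ}$ with composition of morphisms given by composition of correspondences, and let $f\in\operatorname{End}_{\MM}(M)$ be the endomorphism attached to $\Gamma$; then $\Gamma^{\circ N}$ corresponds to $f^{\circ N}$, so it suffices to prove that $f$ is nilpotent. First I would exploit numerical triviality through the categorical trace $\operatorname{tr}\colon\operatorname{End}(M)\to\operatorname{End}(\mathbf 1)=\QQ$: for self--correspondences the trace is an intersection number with (a transpose of) the diagonal, so $\operatorname{tr}(f\circ g)$ is computed by an intersection number on $X\times X$ and hence vanishes for every $g$, because $\Gamma$ is numerically trivial. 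Taking $g=f^{\circ(k-1)}$ gives
\[ \operatorname{tr}(f^{\circ k})=0\qquad(k\ge 1). \]
Moreover, by cyclicity of the trace the set $\mathcal{N}$ of numerically trivial endomorphisms is a two--sided ideal in $\operatorname{End}(M)$, so every power of $f$, and every compression of such a power to a direct summand, again lies in $\mathcal{N}$.

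The heart of the argument is a motivic Cayley--Hamilton identity, and I would establish it first for $M$ evenly finite--dimensional, say $\wedge^{m}M=0$. The symmetric group $S_{m}$ acts on $M^{\otimes m}$ through the permutation correspondences, and the antisymmetrizer $e^-_{m}=\tfrac1{m!}\sum_{\sigma\in S_m}\operatorname{sgn}(\sigma)\,\sigma$ is the projector onto $\wedge^{m}M=0$, whence $e^-_m=0$ on $M^{\otimes m}$. Composing this vanishing with $f^{\otimes m}$ and then contracting all but one tensor slot by means of the evaluation and coevaluation of the rigid structure, one extracts a relation
\[ \sum_{i=0}^{m}(-1)^i\, s_i(f)\, f^{\circ(m-i)}=0\ \in\ \operatorname{End}(M), \]
in which $s_0=1$ and, by Newton's identities, each $s_i(f)\in\QQ$ is a universal polynomial in the power sums $\operatorname{tr}(f),\dots,\operatorname{tr}(f^{\circ i})$. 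Since we are in characteristic zero and all these power sums vanish, Newton's identities force $s_i(f)=0$ for $i\ge1$, and the relation collapses to $f^{\circ m}=0$. Thus every numerically trivial endomorphism of an evenly finite--dimensional motive is nilpotent, with the uniform index $m$; the oddly finite--dimensional case ($\operatorname{Sym}^{m}M=0$) is strictly parallel, using the symmetrizer and the opposite sign conventions, and yields the same conclusion.

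It remains to combine the two parities for a general finite--dimensional $M$, which by definition decomposes as $M=M^+\oplus M^-$ with $M^+$ evenly and $M^-$ oddly finite--dimensional. Writing $f$ as a $2\times2$ block endomorphism with diagonal parts $\alpha\in\operatorname{End}(M^+)$, $\delta\in\operatorname{End}(M^-)$ and off--diagonal parts $\beta,\gamma$, the fact that $\mathcal{N}$ is a two--sided ideal shows that $\alpha$, $\delta$, and more generally every compression $\pi^+ g\,\iota^+$, $\pi^- g\,\iota^-$ of an element $g\in\mathcal{N}$ to $M^\pm$, is a numerically trivial endomorphism of $M^+$ or $M^-$, hence nilpotent of uniform index by the previous paragraph. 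Consequently the ideals $\mathcal{N}(M^+,M^+)$ and $\mathcal{N}(M^-,M^-)$ are nil of bounded index, so by the Nagata--Higman theorem they are nilpotent as (non--unital) algebras; expanding a high power $f^{\circ N}$ by inserting the projectors $\pi^+ +\pi^-$ between the factors and grouping the resulting paths according to their returns to each summand, one sees that every term contains a product of sufficiently many elements of one of these nilpotent ideals and therefore vanishes. This gives $f^{\circ N}=0$, which is the assertion.

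The genuine obstacle is the second step: turning the categorical vanishing $e^-_m=0$ into the scalar Cayley--Hamilton relation requires careful bookkeeping of partial traces and of the $S_m$--action in the rigid symmetric monoidal setting, and it is precisely here that finite--dimensionality is used in an essential way. By comparison, the translation of numerical triviality into vanishing traces is routine, and the parity--combination in the last step, while it needs the bounded--index nilpotence input, is formal once the uniform nilpotence of the pure even and pure odd cases is available.
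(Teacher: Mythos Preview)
Your outline is essentially the standard proof of Kimura's nilpotence theorem (translation of numerical triviality into vanishing of power traces, a tensor--categorical Cayley--Hamilton identity in the pure--parity case giving nil of bounded index, and then Nagata--Higman plus a block argument to handle $M=M^+\oplus M^-$), and it is correct at the level of strategy. The only place where you are genuinely sketchy is the last paragraph: the sentence ``every term contains a product of sufficiently many elements of one of these nilpotent ideals'' deserves a line of justification. One clean way is to use the $\ZZ/2\ZZ$--grading of $\operatorname{End}(M^+\oplus M^-)$ by block parity; then $\mathcal N_0=\mathcal N(M^+,M^+)\oplus\mathcal N(M^-,M^-)$ is nilpotent of some index $K$ and $\mathcal N_1^2\subset\mathcal N_0$, and an elementary parity--walk analysis of a long word in homogeneous elements of $\mathcal N$ shows that any product of length roughly $K^2$ already lands in $\mathcal N_0^{K}=0$. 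With that small addition your argument is complete.

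As for the comparison you were asked to make: the paper does \emph{not} give its own proof of this theorem. Theorem~\ref{nilp} is stated with attribution to Kimura~\cite{K} and is used as a black box (notably in the proof of Corollary~\ref{relbloch}); no argument is supplied in the paper. So there is nothing to compare your proposal against beyond noting that what you wrote is, up to the bookkeeping mentioned above, the argument one finds in~\cite{K} and in the expository accounts~\cite{An},~\cite{J4}.
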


 Actually, the nilpotence property (for all powers of $X$) could serve as an alternative definition of finite--dimensional motive, as shown by a result of Jannsen \cite[Corollary 3.9]{J4}.
   Conjecturally, any variety has finite--dimensional motive \cite{K}. We are still far from knowing this, but at least there are quite a few non--trivial examples:
 
\begin{remark} 
The following varieties have finite--dimensional motive: varieties dominated by products of curves \cite{K}, $K3$ surfaces with Picard number $19$ or $20$ \cite{P}, surfaces not of general type with vanishing geometric genus \cite[Theorem 2.11]{GP}, Godeaux surfaces \cite{GP}, Catanese and Barlow surfaces \cite{V8}, certain surfaces of general type with $p_g=0$ \cite{PW}, Hilbert schemes of surfaces known to have finite--dimensional motive \cite{CM}, generalized Kummer varieties \cite[Remark 2.9(\rom2)]{Xu},
 3--folds with nef tangent bundle \cite{Iy} (an alternative proof is given in \cite[Example 3.16]{V3}), 4--folds with nef tangent bundle \cite{Iy2}, log--homogeneous varieties in the sense of \cite{Br} (this follows from \cite[Theorem 4.4]{Iy2}), certain 3--folds of general type \cite[Section 8]{V5}, varieties of dimension $\le 3$ rationally dominated by products of curves \cite[Example 3.15]{V3}, varieties $X$ with Abel--Jacobi trivial Chow groups (i.e. $A^i_{AJ}X_{\QQ}=0$ for all $i$) \cite[Theorem 4]{V2}, products of varieties with finite--dimensional motive \cite{K}.
\end{remark}

\begin{remark}
It is worth pointing out that all examples of finite-dimensional motives known so far happen to be in the tensor subcategory generated by Chow motives of curves (i.e., they are ``motives of abelian type'' in the sense of \cite{V3}). 
That is, the finite--dimensionality conjecture is still unknown for any motive {\em not\/} generated by curves (there are many such motives, cf. \cite[7.6]{D}).

For $K3$ surfaces, it is expected that the motive is of abelian type. (Indeed, the Kuga--Satake construction \cite{KS}, combined with the standard conjectures, shows the homological motive of a $K3$ surface is generated by curves. Kimura's finite--dimensionality conjecture then implies the same holds for the Chow motive.)
\end{remark}

\section{Main}

\begin{theorem}\label{main} Let $X$ be a $K3$ surface defined as a complete intersection of quadrics in $\PP^5$:
  \[   \begin{cases}  a_0 x_0^2 +a_1x_1^2+\cdots +a_5 x_5^2=0 &\\
                                 b_0 x_0^2 +b_1x_1^2+\cdots +b_5 x_5^2=0 &\\
                                 c_0 x_0^2 +c_1x_1^2+\cdots +c_5 x_5^2=0\ , &\\
                           \end{cases}\]
                 with $a_i, b_i, c_i\in \C, i=0,\ldots,5$.
                 
   Then $X$ has finite--dimensional motive. (Actually, $X$ even has motive of abelian type.)
   \end{theorem}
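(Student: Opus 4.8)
The plan is to split off the transcendental part of the motive and to exhibit it as a direct summand of a motive of abelian type. First I would use the refined Chow--Künneth decomposition for a $K3$ surface (as in \cite{MNP}): one writes
  \[ \mathfrak{h}(X)=\mathbf{1}\oplus \mathfrak{h}^2(X)\oplus \mathbb{L}^2,\qquad \mathfrak{h}^2(X)=\mathfrak{h}^2_{alg}(X)\oplus \mathfrak{t}(X), \]
where $\mathfrak{h}^2_{alg}(X)$ is a sum of copies of the Lefschetz motive $\mathbb{L}$ indexed by a basis of $\mathrm{NS}(X)_\QQ$, and the transcendental motive $\mathfrak{t}(X)$ is characterised by $H^\ast(\mathfrak{t}(X))=H^2_{tr}(X)$ and $A^\ast(\mathfrak{t}(X))_\QQ=A^2_{hom}(X)_\QQ$. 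The summands $\mathbf{1}$, $\mathbb{L}^2$ and $\mathfrak{h}^2_{alg}(X)$ are sums of Tate motives, hence finite--dimensional of abelian type, so the theorem reduces to proving that $\mathfrak{t}(X)$ is finite--dimensional and of abelian type.

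Next I would exploit the diagonal shape of the equations. Every sign change $x_i\mapsto\pm x_i$ preserves $X$; modulo scalars this gives an action of $(\ZZ/2\ZZ)^5$ on $X$, and the subgroup $G\cong(\ZZ/2\ZZ)^4$ of elements fixing the holomorphic $2$--form acts symplectically. As in \cite{GS}, this symplectic action forces a rank--$15$ co--invariant sublattice inside $\mathrm{NS}(X)$, which together with the polarization accounts for the (generic) Picard number $16$ and leaves $H^2_{tr}(X)$ of rank $6$; moreover, since $G$ acts trivially on $H^{2,0}(X)$ and $H^2_{tr}(X)$ is the smallest sub--Hodge structure whose complexification contains $H^{2,0}(X)$, the group $G$ acts trivially on $H^2_{tr}(X)=H^\ast(\mathfrak{t}(X))$. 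The geometric heart of the argument, following the philosophy of \cite{GS} that such surfaces are governed by abelian surfaces, is to read off from the net of quadrics defining $X$ its degenerate (six lines) discriminant in $\PP^2$ and to attach to this configuration a product of curves $C_1\times C_2$, together with an algebraic correspondence realising $H^2_{tr}(X)$ as a sub--Hodge structure of $H^\ast(\mathfrak{h}(C_1\times C_2))$.

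With such a product of curves in hand, finite--dimensionality of abelian type is transferred as follows. A variety dominated by a product of curves has finite--dimensional motive of abelian type \cite{K}, so it suffices to realise $\mathfrak{t}(X)$ as a genuine (not merely cohomological) direct summand of $\mathfrak{h}(C_1\times C_2)$. Concretely I would aim to produce a dominant correspondence from $C_1\times C_2$ to $X$, so that a transpose--composition equals the identity on $\mathfrak{t}(X)$ on the nose; then $\mathfrak{t}(X)$ is an honest summand of an abelian--type motive and inherits both properties. Here the nilpotence theorem (Theorem \ref{nilp}), applied on the side of the finite--dimensional motive $\mathfrak{h}(C_1\times C_2)$, is what licenses passing from cohomological identities to idempotents refining the required projectors. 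Reassembling with the first paragraph then yields that $\mathfrak{h}(X)$ is finite--dimensional of abelian type.

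The main obstacle is precisely the middle step: because the transcendental lattice has rank $6$, the auxiliary object cannot be a single abelian surface nor a Kummer surface (whose transcendental ranks are at most $5$), so one is forced to extract the $K3$--type piece of rank $6$ from $H^1(C_1)\otimes H^1(C_2)$ with at least one factor of genus $\ge 2$, the curves being dictated by the six--line discriminant. Constructing the corresponding algebraic cycle explicitly, and guaranteeing that the resulting splitting is carried by honest correspondences rather than holding only in cohomology, is the delicate point; it is exactly here that one must avoid the circularity of invoking nilpotence on $X$ itself, using instead the already--known finite--dimensionality of the product of curves.
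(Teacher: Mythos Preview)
Your outline has the right instincts (split off $\mathfrak t(X)$, use the symplectic $(\ZZ/2\ZZ)^4$ action, link to a product of curves), but the crucial middle step is left as an admitted ``main obstacle'', and as written it is a genuine gap. You propose to build, essentially from scratch, a dominant correspondence $C_1\times C_2\dashrightarrow X$ whose transpose--composition is the identity on $\mathfrak t(X)$; you do not actually construct it, and it is unclear how you would without already knowing finite--dimensionality of $X$. The paper avoids this entirely by passing through the \emph{quotient}. The invariant ring under $G$ is generated by $y_i=x_i^2$ and $t=\prod x_i$, so $X/G$ is the double cover of the plane $\{\sum a_iy_i=\sum b_iy_i=\sum c_iy_i=0\}\subset\PP^5$ branched along the six lines $y_i=0$; its minimal resolution $Y$ is precisely the $K3$ surface of Paranjape \cite{Par}, which is already known to be rationally dominated by a product of curves. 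Thus the ``delicate'' correspondence you seek is simply the graph of the rational map $X\dashrightarrow Y$ together with Paranjape's domination of $Y$.

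The second ingredient you are missing is why this quotient map suffices on the level of Chow motives. You observe only that $G$ acts trivially on $H^2_{tr}(X)$; the paper needs, and invokes, the much deeper theorem of Voisin \cite{V11} and Huybrechts \cite{Hu} that a symplectic automorphism group acts trivially on $A^2_{hom}(X)_\QQ$. This yields $A^2_{hom}(X)_\QQ = A^2_{hom}(X)_\QQ^G \cong A^2_{hom}(Y)_\QQ$, and then (e.g.\ as in \cite[Theorem 3.3(\romannumeral1)]{Ped}) an isomorphism of Chow motives $\mathfrak h(X)\cong \mathfrak h(Y)$. No nilpotence argument on either side is required. In short: your plan tries to go directly from a product of curves to $X$; the paper goes from a product of curves to $Y$ (done by Paranjape) and from $Y$ to $X$ via the quotient, with Voisin--Huybrechts supplying the Chow--level control. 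Once you insert these two inputs, your outline becomes the paper's proof; without them it does not close.
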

   
 \begin{proof} We may assume that $X$ is a generic member of the family (this follows from \cite[Lemma 3.2]{Vo}).
  As explained in \cite[Section 10.2]{GS}, the $K3$ surface $X$ admits a symplectic action of the group $G=(\ZZ/2\ZZ)^4$, given by
 the transformations of $\PP^5$ changing an even number of signs in the coordinates. To determine the quotient $X/G$, one notes (following loc. cit.) that the
 invariant polynomials under the action of $G$ are exactly $x_0^2, x_1^2,\ldots, x_5^2$ and $x_0x_1x_2x_3x_4x_5$. Denoting them by $y_0,y_1,\ldots,y_5, t$, there is the relation
   \[ t^2=\prod_{i=0}^5 y_i\ ,\]
   and so the quotient $X/G$ is a double cover of the plane given by the intersection of hyperplanes in $\PP^5$:
   \[   \begin{cases}  a_0 y_0 +a_1y_1+\cdots +a_5 y_5=0 &\\
                                 b_0 y_0 +b_1y_1+\cdots +b_5 y_5=0 &\\
                                 c_0 y_0 +c_1y_1+\cdots +c_5 y_5=0\ . &\\
                           \end{cases}\] 
                  The branch locus consists of $6$ lines meeting at $15$ points; these $15$ points correspond to $15$ nodes on the surface $X/G$. For $X$ generic, no $3$ of the $6$ lines intersect.     
                  Let
                  \[ Y\ \to\ X/G\]
                  denote a minimal resolution of singularities, so $Y$ is a $K3$ surface (of the type studied in \cite{Par}). 
                  %There exists a diagram
                %  \[ \begin{array}[c]{ccc}
                 %    \wt{X} &\to& X\\
                  %  \ \ \  \downarrow{\wt{p}}& &\ \ \downarrow{p}\\
                   % Y &\to& X/G\ ,\]
                  %  \end{array}\]
              %  where $\wt{X}\to X$ is a sequence of blow--ups.    
                 Thanks to the work of Voisin \cite{V11} and Huybrechts \cite{Hu}, we know that $G$ acts trivially on $0$--cycles, i.e.
                 \[  A^2_{hom}(X)_{\QQ}= A^2_{hom}(X)_{\QQ}^G  \ .   \]  
               This implies that the rational map $X\dashrightarrow Y$ induces an isomorphism
             \[  A^2_{hom}(X)_{\QQ}= A^2_{hom}(Y)_{\QQ}      \ .\]        
             Using this isomorphism of Chow groups, one proves (for instance as in \cite[Theorem 3.3 (\rom1)]{Ped}) that there is also an isomorphism of Chow motives
             \[  h(X)\cong h(Y)\ \ \hbox{in}\ \MM_{\rm rat}\ ,\]
            induced by the rational map $X\dashrightarrow Y$.
            But the $K3$ surface $Y$, being rationally dominated by a product of curves \cite[Section 3]{Par}, has finite--dimensional motive.
            \end{proof}                                

\begin{remark} Rather than referring to the very general \cite{V11}, in the specific case of theorem \ref{main} one can also establish directly, ``by hand'', that there is an isomorphism
  \[  A^2_{hom}(X)_{\QQ}\cong A^2_{hom}(Y)_{\QQ}\ . \]
 % (from which it readily follows that 
  %$t_2(X)\cong t_2(Y)$ and hence 
 % $h(X)\cong h(Y)$). 
 To do this, one can apply \cite[Lemma 2.3]{WHu}.
  \end{remark}

\begin{remark}\label{maybe} In the very nice article \cite{GS}, Garbagnati and Sarti study $K3$ surfaces with a $(\ZZ/2\ZZ)^4$ symplectic action (such as $X$ of theorem \ref{main}), and also $K3$ surfaces (such as $Y$ in the above proof) arising as desingularizations of quotients of a $K3$ surface under a symplectic $(\ZZ/2\ZZ)^4$ action. The families of $K3$ surfaces satisfying one of these properties are $4$--dimensional, so the general element is {\em not\/} a Kummer surface. Yet (as shown in loc. cit.) $K3$ surfaces in these families are somehow ``close to Kummer surfaces'', in that they retain many of the special properties of Kummer surfaces.

It would be interesting to try and extend theorem \ref{main} to other $4$--dimensional families considered in \cite{GS}. An obvious test case would be the family of Heisenberg invariant quartic surfaces \cite{E}, \cite[Section 10.1]{GS}. More ambitiously: can one somehow prove (perhaps using the existence of an Enriques involution \cite[Theorem 7.15]{GS}) finite--dimensionality for {\em all\/} families considered in \cite{GS} ?

\end{remark}

\section{Consequences}

This section contains some corollaries of theorem \ref{main}. A first corollary is that an old conjecture of Voisin \cite{V9} is true for $X$:

\begin{corollary}\label{voisinconj} Let $X$ be a $K3$ surface as in theorem \ref{main}. Let $a,a^\prime\in A^2_{hom}(X)_{}$ be two $0$--cycles of degree $0$. Then
  \[ a\times a^\prime=a^\prime\times a\ \ \hbox{in}\ A^4(X\times X)_{}\]
  (here the notation $a\times a^\prime$ is a short--hand for the cycle class $(p_1)^\ast (a)\cdot (p_2)^\ast(a^\prime)\in A^{4}(X\times X)$, where $p_1, p_2$ denote projection on the first, resp. second factor.)
\end{corollary}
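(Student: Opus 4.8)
The plan is to reduce the identity to a vanishing statement for the exterior square of the transcendental motive, and then to exploit the reduction to curves already established in the proof of Theorem \ref{main}. As a preliminary reduction I would pass to $\QQ$--coefficients: the fourfold $X\times X$ has trivial Albanese variety (since $H^1(X)=0$), so by Roitman's theorem the group $A^4(X\times X)=A_0(X\times X)$ is torsion--free. Hence it suffices to prove $a\times a^\prime=a^\prime\times a$ in $A^4(X\times X)_{\QQ}$, and the torsion--freeness will upgrade the rational identity to the integral one at the end.

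Next I would isolate the relevant motivic summand. In the Chow--Künneth decomposition
\[ h(X)=\mathbf 1\oplus h^2_{\mathrm{alg}}(X)\oplus t(X)\oplus \mathbb{L}^2 \]
the algebraic summands contribute nothing to degree--zero $0$--cycles, so $A^2_{hom}(X)_{\QQ}=A^2\bigl(t(X)\bigr)$, and $a,a^\prime$ are represented by morphisms $\mathbf 1\to t(X)(2)$. The difference $a\times a^\prime-a^\prime\times a$ is the image of $a\otimes a^\prime-a^\prime\otimes a$ under the antisymmetrisation for the factor--exchanging involution, so it lies in the summand
\[ \mathrm{Hom}\bigl(\mathbf 1,\ \wedge^2 t(X)(4)\bigr)=A^4\bigl(\wedge^2 t(X)\bigr)\ \subset\ A^4(X\times X)_{\QQ}\ . \]
Thus the corollary reduces to the vanishing of this one class, and it would suffice to prove $A^4(\wedge^2 t(X))_{\QQ}=0$.

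Here I would feed in the geometry of the proof of Theorem \ref{main}. That proof produces an isomorphism $h(X)\cong h(Y)$ with $Y$ a $K3$ surface dominated by a product of curves $C_1\times C_2$; consequently $t(X)\cong t(Y)$ is a direct summand of $h^1(C_1)\otimes h^1(C_2)$. Applying the idempotent $\wedge^2$ of the corresponding projector, $\wedge^2 t(X)$ is a direct summand of $\wedge^2\bigl(h^1(C_1)\otimes h^1(C_2)\bigr)$, hence of $h\bigl(C_1\times C_1\times C_2\times C_2\bigr)$. Therefore $A^4(\wedge^2 t(X))_{\QQ}$ is a direct summand of $A^4(C_1^2\times C_2^2)_{\QQ}$, i.e.\ of the group of $0$--cycles on a product of curves, on which Beauville's eigenspace decomposition $A^\bullet_{\QQ}=\bigoplus_s A^\bullet_{(s)}$ and the Fourier transform of the associated abelian variety $A:=J(C_1)^2\times J(C_2)^2$ are available.

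The crux, and the step I expect to be the main obstacle, is to show that our class lands in a Beauville component that is provably zero. Two inputs should pin it down: the swap acts by $-1$, and, because $h^{2,0}(X)=1$, one has $H^{4,0}(\wedge^2 t(X))=\wedge^2 H^{2,0}(X)=0$, so the Hodge structure of $\wedge^2 t(X)$ has coniveau $\ge 1$. Combined with the fact that $\wedge^2 t(X)$ is finite--dimensional and of abelian type — so that the generalized Hodge conjecture and the Fourier--theoretic vanishing on $A$ are at our disposal — these constraints should force the class into the range where the Beauville--Bloch vanishing is known, whence $A^4(\wedge^2 t(X))_{\QQ}=0$. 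Making this index--matching precise is exactly the delicate point, and it is here that abelian type, rather than mere finite--dimensionality, is indispensable: finite--dimensionality yields only the nilpotence of the (homologically trivial) antisymmetrising correspondence, while the reduction to the abelian variety $A$ supplies its actual vanishing. The torsion--freeness recorded at the outset then gives the claimed integral identity.
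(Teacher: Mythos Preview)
Your reduction to $\QQ$--coefficients and your observation that the class $a\times a'-a'\times a$ lives in $A^4(\wedge^2 t(X))_{\QQ}$ are both sound, and your use of the isomorphism $t(X)\cong t(Y)$ from Theorem~\ref{main} is exactly what the paper does as well (via a commutative diagram relating $A^2_{hom}(X)\otimes A^2_{hom}(X)\to A^4(X\times X)$ to the analogous map for $Y$). So up to this point the two arguments run in parallel.

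The genuine gap is your final paragraph. You assert that because $\wedge^2 t(X)$ is of abelian type, ``the generalized Hodge conjecture and the Fourier--theoretic vanishing on $A$ are at our disposal''. This is not correct: the generalized Hodge conjecture is \emph{open} for abelian varieties, and being of abelian type does not supply it. Your coniveau observation $H^{4,0}(\wedge^2 t(X))=0$ is true, but you have no mechanism to convert it into a statement about Chow groups without GHC, so the ``index--matching'' you flag as delicate is in fact a step you cannot complete with the tools you invoke. Relatedly, your target $A^4(\wedge^2 t(X))_{\QQ}=0$ is strictly stronger than what is needed and correspondingly harder.

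The paper avoids this obstacle by a more concrete route. Instead of trying to kill all of $A^4(\wedge^2 t(Y))$, it uses Paranjape's algebraic Kuga--Satake correspondence for $Y$: there is an abelian variety $B=A\times A$ of dimension $2g$ and a correspondence $\Gamma$ inducing a split injection $t_2(Y)\hookrightarrow h^{4g-2}(B)$ in $\mathcal{M}_{\rm num}$; finite--dimensionality upgrades this to an injection $\Gamma_\ast\colon A^2_{AJ}(Y)_{\QQ}\hookrightarrow A^{2g}(B)_{\QQ}$, and a short computation with Beauville's grading shows the image lands precisely in $A^{2g}_{(2)}(B)_{\QQ}$. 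The corollary then reduces to an explicit statement about abelian varieties: for $a,a'\in A^{2g}_{(2)}(B)_{\QQ}$ one has $a\times a'=a'\times a$ in $A^{4g}(B\times B)_{\QQ}$. This is proved by hand (no GHC): Bloch's description of $A^{2g}_{(2)}(B)_{\QQ}$ as generated by products $D_1\cdots D_{2g}$ with exactly two $D_j\in\operatorname{Pic}^0(B)_{\QQ}$, together with the isogeny $\sigma(x,y)=(x+y,x-y)$ on $B\times B$ and the identity $\sigma\circ\iota\circ\sigma=2(\mathrm{id}_B,-\mathrm{id}_B)$, gives the swap invariance directly since an even number of the $D'_j$ change sign under $-\mathrm{id}_B$. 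The moral is that the abelian--type input is used not to import GHC, but to land $A^2_{hom}(Y)$ in a specific Beauville piece where a concrete symmetry argument applies.
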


\begin{proof} (This is not a corollary of finite-dimensionality as such, but rather of the isomorphism $A^2_{hom}(X)\cong A^2_{hom}(Y)$ obtained in the proof of theorem \ref{main}, plus the fact that $Y$ is a particularly well--understood $K3$ surface.)

Since $A^2_{hom}(X)$ is torsion free \cite{R}, it suffices to prove the corresponding statement with rational coefficients. Using \cite[Lemma 3.2]{Vo}, we may assume $X$ is a generic member of the family of theorem \ref{main}.
%This follows from the isomorphism
%  \[  A^2_{hom}(X)_{\QQ}\cong A^2_{hom}(Y)_{\QQ}\ \]
%  (established in the course of the proof of theorem \ref{main}), plus the fact that corollary \ref{voisinconj} is true for $Y$ (this is proven in \cite[Proposition 41]{thoughts}, and can also be proven in a direct fashion along the lines of \cite[Theorem 3.4]{V9}). The commutative diagram
Let $Y$ be the $K3$ surface of the proof of theorem \ref{main}.
There is a commutative diagram
      \[ \begin{array}[c]{ccc}
         A^2_{hom}(X)_{\QQ}\otimes A^2_{hom}(X)_{\QQ} &\to & A^4(X\times X)_{\QQ}\\
         \ \ \uparrow{\cong}&&\uparrow\\
         A^2_{hom}(Y)_{\QQ}\otimes A^2_{hom}(Y)_{\QQ} &\to & A^4(Y\times Y)_{\QQ}\\ 
         \end{array}\]
 (where the left vertical arrow is an isomorphism thanks to theorem \ref{main}). We are now reduced to proving Voisin's conjecture for $Y$, i.e.
 
 \begin{proposition}\label{6lines} Let $Y$ be a desingularization of the double cover of $\PP^2$ branched along $6$ lines in general position. Let $a,a^\prime\in A^2_{hom}(Y)_{}$ be two $0$--cycles of degree $0$. Then
  \[ a\times a^\prime=a^\prime\times a\ \ \hbox{in}\ A^4(Y\times Y)_{}\ .\]
  \end{proposition}
  
  \begin{proof} This is \cite[Proposition 41]{thoughts}, whose proof we reproduce. 
  This proof hinges on the fact that the Kuga--Satake construction for $Y$ is algebraic. That is, according to Paranjape \cite{Par} there exist an abelian variety $A$ of dimension $g$ and a correspondence $\Gamma^\prime\in A^2(Y\times A\times A)$ such that
   \[   (\Gamma^\prime)_\ast\colon\ \ T_Y\ \to\ H^2(A\times A)\]
   is an injection. Since homological and numerical equivalence coincide for surfaces and abelian varieties, it follows that there is also an injection
   \[  \Gamma^\prime\colon t_2(Y)\ \to\  h^2(A\times A)\ \ \hbox{in}\ {\mathcal M}_{num}\ ,\]
   where $t_2(Y)$ is the transcendental part of the motive of $Y$ in the sense of \cite{KMP}, and ${\mathcal M}_{num}$ denotes the category of motives modulo numerical equivalence. Composing with some Lefschetz operator, one also gets an injection
   \[ \Gamma\colon\ \ t_2(Y)\ \to\ h^{4g-2}(A\times A)\ \ \hbox{in}\ {\mathcal M}_{num}\ \]
   (here $\Gamma$ is the composition $L^{2g-2}\circ \Gamma^\prime$, where $L$ is an ample line bundle on $A\times A$).

   The category ${\mathcal M}_{num}$ being semi--simple \cite{J1}, this is a split injection, i.e. there exists a correspondence $\Psi\in A^{g+1}(A\times A\times Y)$ such that
  \[   \Psi\circ \Gamma=\hbox{id}\colon\ \ t_2(Y)\ \to\ t_2(Y)\ \ \hbox{in}\ {\mathcal M}_{num}\ .\]
  But the motive $t_2(Y)$ is finite--dimensional (it is a direct summand of $h(Y)$, which is finite--dimensional since $Y$ is dominated by a product of curves \cite{Par}). This implies that there exists $N$ such that
    \[   \bigl( \Delta - \Psi\circ \Gamma \bigr)^{\circ N}=0\colon\ \ t_2(Y)\ \to\ t_2(Y)\ \ \hbox{in}\ {\mathcal M}_{rat}\ ,\]
    and hence that
    \[  \Gamma\colon\ \ A^2_{hom}(Y)_{\QQ}=A^2_{AJ}(Y)_{\QQ}= A^2_{AJ}(t_2(Y))_{\QQ}\ \to\ A^{2g}_{AJ}(A\times A)_{\QQ}\]
    is injective. 
    We note that, by construction, the action of $\Gamma$ on Chow groups factors as
    \[  \Gamma\colon\ \ A^2_{AJ}(Y)_{\QQ} \xrightarrow{ \Gamma^\prime} A^2(A\times A)_{\QQ} \xrightarrow{L^{2g-2}} A^{2g}(A\times A)_{\QQ}\ .\]
    Let $A^\ast_{(\ast)}()_{\QQ}$ denote Beauville's filtration on Chow groups of abelian varieties \cite{Beau}. It follows that
    \[ \Gamma_\ast \bigl( A^2_{AJ}(Y)_{\QQ}\bigr)\ \subset\ \bigoplus_{j\le 2} A^{2g}_{(j)}(A\times A)_{\QQ}\ ,\]
    as the Lefschetz operator preserves Beauville's filtration \cite{Kun}.
    On the other hand, 
    \[ \Gamma_\ast \bigl( A^2_{AJ}(Y)_{\QQ}\bigr)\ \subset\  A^{2g}_{AJ}(A\times A)_{\QQ}= \bigoplus_{j\ge 2} A^{2g}_{(j)}(A\times A)_{\QQ}\ .\]   
    The conclusion is that there is an injection
    \[ \Gamma_\ast\colon\ \ A^2_{AJ}(Y)_{\QQ}\ \to\ A^{2g}_{(2)}(A\times A)_{\QQ}\ .\] 
          
    The same argument gives also that
    \[  \Gamma\times\Gamma\colon\  \ima \Bigl( A^2_{hom}(Y)\otimes A^2_{hom}(Y)\ \to\ A^4(Y\times Y)\Bigr)\subset A^{4}(t_2(Y)\otimes t_2(Y)) \to A^{4g}(A^4)\]
    is injective. It now suffices to prove a statement for the abelian variety $B=A\times A$:
    
    \begin{proposition}\label{abcod2} Let $B$ be an abelian variety of dimension $2g$. Let
      \[ a, a^\prime\ \ \in A^{2g}_{(2)}(B)_{\QQ}\]
      be $2$ $0$--cycles. Then
      \[   a\times a^\prime - a^\prime\times a=0\ \ \hbox{in}\ A^{4g}(B\times B)_{\QQ}\ .\]
      \end{proposition}
      
      \begin{proof} The group $A^{2g}_{(2)}(B)_{\QQ}$ is generated by products of divisors
      \[   D_1\cdot D_2\cdot\ldots\cdot D_{2g}\ \ \in A^{2g}(B)_{\QQ}\ ,\]
      with $2$ of the $D_j$ in $A^1_{(1)}(B)_{\QQ}=\pic^0(B)_{\QQ}$, and the remaining $2g-2$ $D_j$ in $A^1_{(0)}(B)_{\QQ}$ \cite{Bl2}.
     As in \cite[Example 4.40]{Vo}, we consider the map
     \[ \sigma\colon B\times B\to B\times B,\ \ (a,b)\mapsto (a+b, a-b)\ .\]
     Since this is an isogeny, it induces an isomorphism on $A^\ast(B\times B)_{\QQ}$. But on the other hand,
     \[  \sigma\circ \iota \circ \sigma= 2 (\hbox{id}_B, -\hbox{id}_B)\colon\ \ B\times B\ \to\ B\times B\ .\]
     It thus suffices to note that
     \[  \begin{split}(\hbox{id}_B, -\hbox{id}_B)_\ast \bigl( &  D_1\cdot\ldots\cdot D_{2g}\times D_1^\prime\cdot\ldots\cdot D^\prime_{2g}\bigr)   =\\ &D_1\cdot\ldots\cdot D_{2g}\times D_1^\prime\cdot\ldots\cdot D^\prime_{2g}
         \  \  \ \hbox{in}\ \ A^{4g}(B\times B)_{\QQ}\ ,\\
         \end{split}\]
         since there is an even number of divisors $D_j^\prime$ for which $(-\hbox{id}_B)_\ast(D^\prime_j)=-D^\prime_j$ in $A^1(B)_{\QQ}$.
         \end{proof}
 \end{proof}
     \end{proof}

\begin{remark} Motivated by the Bloch--Beilinson philosophy, Voisin conjectures that corollary \ref{voisinconj} should be true for any regular surface with geometric genus $1$ \cite[page 270]{V9}. While this has been established in some special cases \cite{V9}, \cite{thoughts}, this conjecture is still wide open for a general $K3$ surface.
\end{remark}

Another corollary is that (a certain version of) the relative Bloch conjecture \cite{B}, \cite{V8} is true for this class of surfaces:

\begin{corollary}\label{relbloch} Let $X$ be a $K3$ surface as in theorem \ref{main}. Let $\Gamma\in A^2(X\times X)_{\QQ}$ be a correspondence such that
      \[ \Gamma_\ast\colon\ \ H^{2,0}(X)\ \to\ H^{2,0}(X)\ \]
      is the identity.
  Then 
  \[  \Gamma_\ast \colon\ \ A^2_{hom}(X)_{\QQ}\ \to\ A^2_{hom}(X)_{\QQ} \]
  is an isomorphism.
  \end{corollary}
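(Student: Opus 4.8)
The plan is to prove Corollary \ref{relbloch} as a formal consequence of finite-dimensionality established in Theorem \ref{main}, following the standard argument that deduces Bloch-type statements from the nilpotence theorem (Theorem \ref{nilp}). The key idea is to build an auxiliary correspondence measuring the failure of $\Gamma$ to be an isomorphism on the transcendental part of the motive, show it is numerically trivial, and then invoke nilpotence to conclude it is actually nilpotent (hence that $\Gamma$ is invertible up to a nilpotent perturbation).

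**Main steps.** First I would pass to the refined Chow--K\"unneth decomposition of the motive of the $K3$ surface $X$, isolating the transcendental motive $t_2(X)$ in the sense of \cite{KMP}, so that $A^2_{hom}(X)_{\QQ} = A^2(t_2(X))_{\QQ}$. The action $\Gamma_\ast$ on $H^{2,0}(X)$ being the identity, together with the fact that $H^2(t_2(X))$ carries the transcendental cohomology $T_X$ (which is the $\Gamma_\ast$-invariant part once we account for the Hodge structure), I would argue that the endomorphism induced by $\Gamma$ on the cohomology realization of $t_2(X)$ is the identity. Here one uses that $T_X$ is an irreducible Hodge structure, so an endomorphism acting as the identity on the one-dimensional piece $H^{2,0}(X)$ must act as the identity on all of $T_X$; this step genuinely requires irreducibility of the transcendental Hodge structure of a $K3$ surface. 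Consequently the correspondence $\Gamma - \Delta_X$, restricted to $t_2(X)$, acts as zero on cohomology, i.e. it is homologically (hence numerically) trivial as a self-correspondence of $t_2(X)$.

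**Invoking nilpotence.** Now the engine is Theorem \ref{nilp}: since $X$ has finite-dimensional motive, the numerically trivial correspondence $\Psi := (\Gamma - \Delta)|_{t_2(X)} \in A^2(t_2(X) \otimes t_2(X))_{\QQ}$ satisfies $\Psi^{\circ N} = 0$ for some $N$. Writing $\Gamma|_{t_2(X)} = \Delta + \Psi$ on $t_2(X)$, the nilpotence of $\Psi$ gives a two-sided inverse via the finite geometric series $\sum_{k=0}^{N-1} (-\Psi)^{\circ k}$, so $\Gamma$ induces an isomorphism of $t_2(X)$ in $\MM_{\rm rat}$. Applying the functor $A^2(-)_{\QQ}$ and using $A^2_{hom}(X)_{\QQ} = A^2(t_2(X))_{\QQ}$, I conclude $\Gamma_\ast$ is an isomorphism on $A^2_{hom}(X)_{\QQ}$. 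Since $A^2_{hom}(X)$ is torsion-free by \cite{R}, the rational statement suffices to give the result over $\ZZ$ as well if desired.

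**Main obstacle.** The delicate point is the cohomological step: one must verify that the hypothesis $\Gamma_\ast = \mathrm{id}$ on $H^{2,0}(X)$ actually forces $\Gamma_\ast = \mathrm{id}$ on the full transcendental lattice $T_X$, and that the complementary pieces (the algebraic part $H^{1,1}_{\mathrm{alg}}$ and the $H^0, H^4$ summands) do not interfere with the action on $A^2_{hom}$. The first is handled by irreducibility of $T_X$ as a rational Hodge structure (a standard fact for $K3$ surfaces, since $\mathrm{End}_{\mathrm{Hodge}}(T_X)$ is a field); the second is handled by the Chow--K\"unneth projectors, since $A^2_{hom}(X)_{\QQ}$ is governed entirely by $t_2(X)$ and the algebraic $(1,1)$-classes contribute only to $A^2(X)_{\QQ}$ modulo homological equivalence, not to the homologically trivial part. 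Once these motivic bookkeeping issues are cleanly separated, the finite-dimensionality input makes the rest automatic.
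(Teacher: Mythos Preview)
Your proposal is correct and follows essentially the same route as the paper: both use irreducibility of the transcendental Hodge structure (and the fact that $\mathrm{End}_{\mathrm{Hodge}}(T_X)$ embeds in $\C$ via its action on $H^{2,0}$) to upgrade the hypothesis to $\Gamma_\ast=\mathrm{id}$ on $H^2(t_2(X))$, then apply Kimura nilpotence to $\Gamma-\Delta$ restricted to the transcendental motive and invert via a finite geometric series. The only cosmetic difference is that the paper phrases the ``restriction to $t_2(X)$'' step by peeling off explicit correspondences $R_0,R_2,R_4$ supported on $x\times X$, $D\times D$, $X\times x$ (which act trivially on $A^2_{hom}$), whereas you work directly in $\mathrm{End}_{\MM_{\rm rat}}(t_2(X))$; these are equivalent formulations.
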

  
  \begin{proof} 
  As is probably well--known, this follows from the finite--dimensionality; the proof goes as follows. The assumption implies (using an argument involving indecomposability of the Hodge structure $H^2(t_2(X))$ as in \cite[Corollary 3.11]{V8} or \cite[Lemma 2.5]{Ped}) that 
    \[  \Gamma_\ast=\hbox{id}\colon\ \ H^2(t_2(X))\ \xrightarrow{}\ H^2(t_2(X))\ \]
   (where $t_2$ denotes again the ``transcendental part of the motive'' as defined in \cite{KMP}). 
   It follows that
   \[  \Gamma-\Delta_X=\bigl( \Gamma-\Delta_X\bigr)\circ \bigl( \pi_0+\pi_2^{alg} +\pi_4\bigr)\ \ \in H^4(X\times X)\ ,\]
   where $\pi_0=x\times X$, $\pi_4=X\times x$, and $\pi_2^{alg}$ is the projector (constructed in \cite{KMP}) supported on $D\times D$, for some divisor $D$.
   It follows that we can write
   \[ \Gamma-\Delta_X= R_0+R_2+R_4\ \ \hbox{in}\ H^4(X\times X)\ ,\]
   where $R_0, R_2, R_4$ are cycles supported on $x\times X$, resp. on $D\times D$, resp. on $X\times x$.
 Applying the nilpotence theorem to the homologically trivial cycle
   \[ \Gamma-\Delta_X-R_0-R_2-R_4\ \ \in A^2(X\times X)\ ,\]
   and noting that the $R_i$ do not act on $A^2_{hom}(X)=A^2_{AJ}(X)$, we find there exists $N\in\NN$ such that
   \[   (\Gamma^{\circ N})_\ast=\hbox{id}\colon\ \ A^2_{hom}(X)\ \to\ A^2_{hom}(X)\ ,\]
   and we are done.
    \end{proof}

  %  \[  \Gamma-\Delta-\gamma=0\ \ \hbox{in}\ H^4(X\times X)\ ,\]
  %  where $\Delta$ is the diagonal and $\gamma$ is a cycle supported on $D\times D$, for some divisor $D\subset X$. The nilpotence theorem then implies there exists $N\in\NN$ such that
   % \[  \bigl(\Gamma-\Delta-\gamma\bigr)^{\circ N}=0\ \ \hbox{in}\ A^2(X\times X)_{\QQ}\ .\]  
  %  Developing this expression (and noting that the correspondence $\gamma$ can not act on $A^2_{hom}(X)_{\QQ}=A^2_{AJ}(X)_{\QQ}$), this gives
  %  \[  (\Gamma^{\circ N})_\ast=\hbox{id}\colon\ \ A^2_{hom}(X)_{\QQ}\ \to\ A^2_{hom}(X)_{\QQ}\ .\]
  %  It follows that $\Gamma_\ast$ is both injective and surjective. 
       %   \end{proof}
 
 Another consequence concerns a conjecture of Voevodsky's concerning smash--equivalence \cite{Voe}:
 
 \begin{corollary} Let $Z$ be a product $Z=X_1\times\cdots\times X_s$, where the $X_i$ are $K3$ surfaces as in theorem \ref{main}. Then smash--equivalence and numerical equivalence coincide for $1$--cycles on $Z$.
 \end{corollary}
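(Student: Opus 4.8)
The plan is to use Theorem \ref{main} to replace $Z$ by a variety whose motive is a direct summand of that of a product of curves, and then to appeal to the known case of Voevodsky's conjecture for $1$--cycles on products of curves. What makes this possible is that the proof of Theorem \ref{main} yields, for each factor, not merely finite--dimensionality but an isomorphism of Chow motives $h(X_i)\cong h(Y_i)$ in $\MM_{\rm rat}$, where each $Y_i$ is a $K3$ surface of Paranjape's type and is therefore rationally dominated by a product of curves $\mathcal C_i$ \cite{Par}. Tensoring these isomorphisms gives
\[ h(Z)=h(X_1)\otimes\cdots\otimes h(X_s)\ \cong\ h(Y_1)\otimes\cdots\otimes h(Y_s)=h(W),\qquad W:=Y_1\times\cdots\times Y_s. \]
Since $h(Y_i)$ is a direct summand of $h(\mathcal C_i)$, the motive $h(W)$, and hence $h(Z)$, is a direct summand of $h(\mathcal C)$, where $\mathcal C:=\mathcal C_1\times\cdots\times\mathcal C_s$ is a product of curves. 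Concretely, there are correspondences $\theta\colon h(\mathcal C)\to h(Z)$ and $\theta^\prime\colon h(Z)\to h(\mathcal C)$ with $\theta\circ\theta^\prime=\mathrm{id}_{h(Z)}$.

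First I would transfer the question to $\mathcal C$ along this splitting. The point is that both numerical triviality and smash--nilpotence of cycles are preserved by the action of an arbitrary correspondence $\Gamma$: for the former this is the adjunction $\langle \Gamma_\ast\beta,\delta\rangle=\langle\beta,\Gamma^\ast\delta\rangle$, and for the latter it is the identity $(\Gamma_\ast\beta)^{\times N}=(\Gamma^{\times N})_\ast(\beta^{\times N})$, where $\Gamma^{\times N}$ denotes the external product correspondence on $\mathcal C^N\times Z^N$. Hence, given a numerically trivial $1$--cycle $\alpha\in A_1(Z)_{\QQ}$, the cycle $\theta^\prime_\ast\alpha\in A_1(\mathcal C)_{\QQ}$ is again numerically trivial, and once we know it is smash--nilpotent it follows that $\alpha=\theta_\ast\theta^\prime_\ast\alpha$ is smash--nilpotent as well. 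Since the reverse implication is automatic, this reduces the corollary to the single statement that numerically trivial $1$--cycles on the product of curves $\mathcal C$ are smash--nilpotent.

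This final statement is exactly the theorem of Kahn and Sebastian establishing Voevodsky's conjecture \cite{Voe} for $1$--cycles on products of curves, and together with the elementary reverse inclusion (smash--nilpotence implies numerical triviality \cite{Voe}) it completes the proof. The formal reductions above are routine, so the main obstacle is this input. Here I would stress where the difficulty lies: Voevodsky's nilpotence theorem already shows that cycles algebraically equivalent to zero are smash--nilpotent, so the entire problem is concentrated in the numerically trivial $1$--cycles that are not algebraically trivial, i.e. in the Griffiths group of $\mathcal C$. Controlling these is where finite--dimensionality enters essentially (in the spirit of Theorem \ref{nilp}): one uses Kimura's structure theory for the $\wedge$--finite--dimensional motives $h^1(C_j)$ and the tensor summands of $h(\mathcal C)$ that carry such cycles, and the delicate step is to realise a numerically trivial $1$--cycle as a numerically trivial morphism of motives to which this structure theory applies.
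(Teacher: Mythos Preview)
Your approach is essentially the same as the paper's: exploit that the motive of $Z$ is of abelian type and reduce to a known instance of Voevodsky's conjecture. The paper simply cites Vial \cite[Theorem 3.17]{V3}, which packages the reduction you spell out (transfer along correspondences to a product of curves) and the endpoint (smash--nilpotence of numerically trivial $1$--cycles there); you instead invoke Kahn--Sebastian for that endpoint. Either citation works.

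One small imprecision: from ``$Y_i$ is rationally dominated by a product of curves $\mathcal C_i$'' you cannot directly conclude that $h(Y_i)$ is a direct summand of $h(\mathcal C_i)$, since the dominating map is only rational. You must first resolve indeterminacies, obtaining a morphism from a blow--up $\widetilde{\mathcal C_i}$; then $h(Y_i)$ is a direct summand of $h(\widetilde{\mathcal C_i})$, and the blow--up formula expresses $h(\widetilde{\mathcal C_i})$ as $h(\mathcal C_i)$ plus Lefschetz motives. This does not affect the argument (Lefschetz motives are still of abelian type, and your transfer step goes through verbatim with $\widetilde{\mathcal C}$ in place of $\mathcal C$), but the sentence as written overstates what rational dominance gives.
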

 
 \begin{proof} As noted by Vial \cite[Theorem 3.17]{V3}, this is true for any variety for which the group of $0$--cycles is spanned (via the action of correspondences) by a product of curves.
\end{proof}
 
 A final consequence is that the Beauville--Voisin conjecture \cite[Conjecture 1.3]{V12} is true for the Hilbert schemes of $X$:
  
  \begin{corollary}\label{BV} Let $X$ be a $K3$ surface as in theorem \ref{main}, and let $X^{[m]}$ denote the Hilbert scheme of $m$ points on $X$. Then the restriction of the cycle class map $A^\ast(X^{[m]})_{\QQ}\to H^\ast(X^{[m]})$ to the $\QQ$--subalgebra generated by divisors and Chern classes of the tangent bundle is injective, for all $m\in\NN$.
  \end{corollary}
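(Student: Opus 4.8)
The plan is to reduce the statement to the Chow rings of powers of $X$, where the Beauville--Voisin ring of the $K3$ surface supplies the relevant multiplicative structure, and then to use finite--dimensionality to upgrade cohomological relations to rational equivalences. Throughout I would work with $\QQ$--coefficients, and (as in the proof of theorem \ref{main}, invoking \cite[Lemma 3.2]{Vo}) assume $X$ generic. The variety $X^{[m]}$ is hyperk\"ahler, and the assertion is the injectivity into $H^\ast(X^{[m]})$ of the tautological subring $R^\ast(X^{[m]})\subset A^\ast(X^{[m]})_{\QQ}$ generated by divisor classes and the Chern classes $c_j(T_{X^{[m]}})$.

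First I would invoke the description of the motive of the Hilbert scheme \cite{CM}: there is an isomorphism of Chow motives
\[ h(X^{[m]})\ \cong\ \bigoplus_{\nu} h(X^{\ell(\nu)})(c(\nu))\ \ \hbox{in}\ \MM_{\rm rat}\ , \]
the sum running over partitions $\nu$ of $m$, with $\ell(\nu)$ the length and $c(\nu)$ a Tate twist. Combined with theorem \ref{main}, this already exhibits $h(X^{[m]})$ as finite--dimensional of abelian type. The isomorphism is induced by explicit correspondences supported on incidence varieties, so that the tautological classes on $X^{[m]}$ are carried to cycles on the various $X^{\ell(\nu)}$ expressible in terms of pullbacks of divisors of $X$, pullbacks of $c_2(X)$, and the (partial) diagonals.

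Next I would transport to $X^{[m]}$ the canonical multiplicative Chow--K\"unneth decomposition of the $K3$ surface $X$: the one determined by the Beauville--Voisin zero--cycle $\mathfrak{o}\in A^2(X)_{\QQ}$, for which any product of two divisors, as well as $c_2(X)=24\,\mathfrak{o}$, is a multiple of $\mathfrak{o}$. Propagating this decomposition through the products $X^{\ell(\nu)}$ and then through the isomorphism above should yield a multiplicative bigrading $A^\ast(X^{[m]})_{\QQ}=\bigoplus_{s} A^\ast_{(s)}(X^{[m]})_{\QQ}$ compatible with intersection product. Divisors lie in $A^1_{(0)}$ (on the hyperk\"ahler variety $X^{[m]}$ one has $A^1_{hom}=0$), and a direct computation --- using $c_2(X)=24\,\mathfrak{o}\in A^2_{(0)}(X)$ --- should place all $c_j(T_{X^{[m]}})$ in $A^\ast_{(0)}(X^{[m]})$ as well, so that $R^\ast(X^{[m]})\subset A^\ast_{(0)}(X^{[m]})_{\QQ}$.

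It then suffices to prove that the cycle class map is injective on $A^\ast_{(0)}(X^{[m]})_{\QQ}$, and this is the step that genuinely uses theorem \ref{main}: given a homologically trivial $z\in A^\ast_{(0)}(X^{[m]})_{\QQ}$, one writes $z=(\pi)_\ast z$ for the relevant Chow--K\"unneth projector $\pi$ and applies the nilpotence theorem (theorem \ref{nilp}) to the numerically --- hence homologically --- trivial part of $\pi$ to force $z=0$. I expect the main obstacle to lie precisely here. Finite--dimensionality yields only nilpotence of \emph{numerically} trivial correspondences, so one must verify that the chosen Chow--K\"unneth decomposition is genuinely multiplicative (so that the grading respects products and the tautological classes really do land in the degree--zero piece) and that the projector onto $A^\ast_{(0)}$ annihilates homologically trivial cycles. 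Granting these compatibilities --- where the abelian--type structure \cite{V3} and the well--understood Chow ring of $X$ enter --- the injectivity of $R^\ast(X^{[m]})$ into $H^\ast(X^{[m]})$ follows, establishing the Beauville--Voisin conjecture \cite{V12} for all $m$.
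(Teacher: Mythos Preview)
The paper's proof is a single sentence: it invokes Yin \cite{Yin}, who proves that the Beauville--Voisin conjecture holds for all $X^{[m]}$ whenever the $K3$ surface $X$ has finite--dimensional motive. Combined with theorem \ref{main}, nothing more is needed. Your proposal instead attempts to reconstruct the content of Yin's theorem, so the comparison is really between your sketch and Yin's actual argument.

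The first two steps of your outline are fine and are indeed part of the story: the de Cataldo--Migliorini decomposition reduces everything to powers $X^n$, and the Beauville--Voisin class $\mathfrak{o}$ gives a multiplicative Chow--K\"unneth decomposition placing the tautological classes in $A^\ast_{(0)}$. The gap is in your final step. You claim that for a homologically trivial $z\in A^\ast_{(0)}$ one can ``apply the nilpotence theorem to the numerically trivial part of $\pi$ to force $z=0$''. But the projector $\pi$ onto the $(0)$--piece is \emph{not} numerically trivial (it acts as the identity on a large piece of cohomology), and there is no evident numerically trivial correspondence in sight whose nilpotence would kill $z$. Injectivity of the cycle class map on the whole of $A^\ast_{(0)}$ is the analogue of Beauville's ``weak splitting'' conjecture; it is not known to follow from finite--dimensionality alone, and the mechanism you describe does not establish it.

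Yin's argument avoids this trap: he never claims injectivity on all of $A^\ast_{(0)}$. Instead he identifies the tautological subring $R^\ast(X^n)$ (generated by pullbacks of $\mathfrak{o}$ and partial diagonals) explicitly, determines all the relations holding in its cohomological image, and shows that lifting those relations to rational equivalence is \emph{equivalent} to a single explicit identity in $A^\ast(X^n)$ which is in turn equivalent to Kimura finite--dimensionality of $h(X)$. So finite--dimensionality enters not through nilpotence applied to a CK projector, but as the precise statement that a specific modified diagonal cycle vanishes in Chow. If you want to make your outline work, you would need either to prove injectivity on $A^\ast_{(0)}$ directly (open in this generality) or to restrict attention to the finitely generated tautological subring and argue as Yin does.
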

  
  \begin{proof} Yin has proven \cite{Yin} that the Beauville--Voisin conjecture is true for any $K3$ surface with finite--dimensional motive.
  \end{proof}

\end{document}